\def\omathop#1#2#3{\let\temp=#1\def\letter{#2}
  \ifcat#3_ \let\next\@@olim\else\let\next\@olim\fi\next#3}
\def\@olim{\letter\text{-}\!\temp}
\def\@@olim_#1{\mathchoice{
   \setbox0=\hbox{$\displaystyle\letter\text{-}\!\temp\!\text{-}\letter$}
   \setbox2=\hbox{$\displaystyle\temp$}
   \setbox4=\hbox{$\scriptstyle#1$}
   \dimen@=\wd4 \advance\dimen@ by -\wd2 \divide\dimen@ by2
   \def\next{\letter\text{-}\!\temp_{\hbox to 0pt{\hss$\scriptstyle#1$\hss}}
     \hskip\dimen@}
   \ifdim\wd2>\wd4 \def\next{\@olim_{#1}}\fi
   \ifdim\wd4>\wd0 \def\next{\mathop{\llap{$\letter$-}\!\temp}\limits_{#1}}\fi
   \next}
   {\@olim_{#1}}{\@olim_{#1}}{\@olim_{#1}}}
\def\bolim{\omathop{\lim}{bo}}
\newcommand{\reduce}{\mskip-2mu}
\newcommand{\ls}{\reduce\left\bracevert\reduce\vphantom{X}}
\newcommand{\rs}{\reduce\vphantom{X}\reduce\right\bracevert\reduce}
\theoremstyle{plain}
\newtheorem{thm}{Theorem}
\newtheorem{cor}[thm]{Corollary}
\newtheorem{lemma}[thm]{Lemma}
\newtheorem{rem}[thm]{Remark}
\theoremstyle{definition}
\newtheorem{definition}[thm]{Definition}
\begin{document}

\title[Dominated operators from a lattice-normed space  to a sequence vector lattice]{Dominated operators from a lattice-normed space  to a sequence vector lattice}

\author{Nariman~ Abasov}

\address{MATI -- Russian State Tecnological University\\
str. Orshanski  3,
Moscow, 121552 Russia}

\email{abasovn\copyright mail.ru}

\author{Abd El Monem Megahed}

\address{Department of Basic science, Faculty of Computers and Informatics\\
Suez Canal University\\
Ismailia, Egypt}

\email{$a_{-}$megahed15\copyright yahoo.com}

\author{Marat~Pliev}

\address{South Mathematical Institute of the Russian Academy of Sciences\\
str. Markusa 22,
Vladikavkaz, 362027 Russia}

\email{maratpliev\copyright gmail.com}


\keywords{Narrow operators,   dominated operators, domination problem, lattice-normed spaces, Banach lattices}

\subjclass[2010]{Primary 47H30; Secondary 46B42.}

\begin{abstract}
We show that every dominated  linear operator from a Banach-Kantorovich space over atomless Dedekind complete vector lattice to a sequence Banach lattice $\ell_p(\Gamma)$ or $c_0(\Gamma)$ is narrow.   As a consequence, we obtain that an atomless Banach lattice cannot have a finite dimensional decomposition of a certain kind. Finally,  we  show  that if a  linear  dominated operator $T$ from lattice-normed space $V$ to  Banach-Kantorovich space $W$ is order narrow then the same  is its  exact dominant $\ls T\rs$.
\end{abstract}

\maketitle

\section{Introduction}

Narrow operators generalize compact operators defined on function spaces (see \cite{PP} for the first systematic study and a recent monograph \cite{PR}).
Different classes of narrow operators in framework of vector lattices and lattice-normed spaces   have considered by Popov and third named author in \cite{Pl-6,Pl-7,PP,PP-1}.
In the present paper we continue the of investigation of narrow operators in lattice-normed spaces and  show that,  every dominated linear operator from a Banach-Kantorovich space over an atomless Dedekind complete vector lattice to a sequence Banach lattice  is narrow.  As a consequence, we obtain that an atomless Banach lattice cannot have a finite dimensional decomposition of a certain kind. We  also prove   that if an  linear  dominated operator $T$ from lattice-normed space $V$ to  Banach-Kantorovich space $W$ is order narrow then the same  is its  exact dominant $\ls T\rs$. \footnote{The    third  named author was supported by the  Russian Foundation of Basic Research, the grant number 15-51-53119}.

\section{Preliminaries}

The  goal of this section is to introduce some basic definitions and facts. General information on vector lattices,
Banach spaces and lattice-normed spaces the reader can find in the books \cite{AK,AB,Ku,LT-1,LT-2}.

Consider a vector space $V$ and a real  archimedean vector lattice
$E$. A map $\ls \cdot\rs:V\rightarrow E$ is a \textit{vector norm} if it satisfies the following axioms:
\begin{enumerate}
  \item[1)] $\ls v \rs\geq 0;$\,\, $\ls v\rs=0\Leftrightarrow v=0$;\,\,$(\forall v\in V)$.
  \item[2)] $\ls v_1+v_2 \rs\leq \ls v_1\rs+\ls v_2 \rs;\,\, ( v_1,v_2\in V)$.
  \item[3)] $\ls\lambda  v\rs=|\lambda|\ls v\rs;\,\, (\lambda\in\Bbb{R},\,v\in V)$.
\end{enumerate}
A vector norm is called \textit{decomposable} if
\begin{enumerate}
  \item[4)] for all $e_{1},e_{2}\in E_{+}$ and $x\in V$ from $\ls x\rs=e_{1}+e_{2}$ it follows that there exist $x_{1},x_{2}\in V$ such that $x=x_{1}+x_{2}$ and $\ls x_{k}\rs=e_{k}$, $(k:=1,2)$.
\end{enumerate}

A triple $(V,\ls\cdot\rs,E)$ (in brief $(V,E),(V,\ls\cdot\rs)$ or $V$ with default parameters omitted) is a \textit{lattice-normed space} if $\ls\cdot\rs$ is a $E$-valued vector norm in the vector space $V$. If the norm $\ls\cdot\rs$ is decomposable then the space $V$ itself is called decomposable. We say that a net $(v_{\alpha})_{\alpha\in\Delta}$ {\it $(bo)$-converges} to an element $v\in V$ and write $v=\bolim v_{\alpha}$ if there exists a decreasing net $(e_{\gamma})_{\gamma\in\Gamma}$ in $E_{+}$ such that $\inf_{\gamma\in\Gamma}(e_{\gamma})=0$ and for every $\gamma\in\Gamma$ there is an index $\alpha(\gamma)\in\Delta$ such that $\ls v-v_{\alpha(\gamma)}\rs\leq e_{\gamma}$ for all $\alpha\geq\alpha(\gamma)$. A net $(v_{\alpha})_{\alpha\in\Delta}$ is called \textit{$(bo)$-fundamental} if the net $(v_{\alpha}-v_{\beta})_{(\alpha,\beta)\in\Delta\times\Delta}$ $(bo)$-converges to zero. A lattice-normed space is called {\it $(bo)$-complete} if every $(bo)$-fundamental net $(bo)$-converges to an element of this space. Let $e$ be a positive element of a vector lattice $E$.
By $[0,e]$ we denote the set $\{v\in V:\,\ls v\rs\leq e\}$.
A set $M\subset V$ is called  $\text{(bo)}$-{\it bounded } if there exists
$e\in E_{+}$ such that  $M\subset[0,e]$. Every decomposable $(bo)$-complete lattice-normed space is called a {\it Banach-Kantorovich space} (a BKS for short).

Let $(V,E)$ be a lattice-normed space.  A subspace $V_{0}$ of $V$ is called a $\text{(bo)}$-ideal of $V$ if for $v\in V$ and $u\in V_{0}$, from $\ls v\rs\leq\ls u\rs$ it follows that $v\in V_{0}$. A subspace $V_{0}$ of a decomposable lattice-normed space $V$  is a $\text{(bo)}$-ideal if and only if $V_{0}=\{v\in V:\,\ls v\rs\in L\}$, where $L$ is an order ideal in $E$ (see \cite{Ku}, Proposition~2.1.6.1). Let $V$ be a lattice-normed space and $y,x\in V$. If $\ls x\rs\bot\ls y\rs=0$ then we call the elements $x,y$ {\it disjoint} and write $x\bot y$. The equality $x=\coprod\limits_{i=1}^{n}x_{i}$ means that $x=\sum\limits_{i=1}^{n}x_{i}$ and $x_{i}\bot x_{j}$ if $i\neq j$. An element $z\in V$ is called a {\it component} or a \textit{fragment} of $x\in V$ if $0\leq \ls z\rs\leq\ls x\rs$ and $x\bot(x-z)$. Two fragments $x_{1},x_{2}$ of $x$  are called \textit{mutually complemented} or $MC$, in short, if $x=x_1+x_{2}$. The notations $z\sqsubseteq x$ means that $z$ is a fragment of $x$. The set of all fragments of an element $v\in V$ is denoted by $\mathcal{F}_{v}$. According to (\cite{AB}, p.111) an element $e>0$ of a vector lattice $E$ is called an {\it atom}, whenever $0\leq f_{1}\leq e$, $0\leq f_{2}\leq e$ and $f_{1}\bot f_{2}$ imply that either $f_{1}=0$ or $f_{2}=0$. A vector lattice $E$ is  atomless if there is no atom $e\in E$.

Consider some important examples of lattice-normed spaces. We begin with simple extreme cases, namely vector lattices and normed spaces. If $V=E$ then the modules of an element can be taken as its lattice norm: $\ls v\rs:=|v|=v\vee(-v);\,v\in E$. Decomposability of this norm easily follows from the Riesz Decomposition Property holding in every vector lattice. If $E=\Bbb{R}$ then $V$ is a normed space.

Let $Q$ be a compact  and let $X$ be a Banach space. Let $V:=C(Q,X)$ be the space of  continuous vector-valued functions from $Q$ to $X$. Assign $E:=C(Q,\Bbb{R})$. Given $f\in V$, we define its lattice norm by the relation $\ls f\rs:t\mapsto\|f(t)\|_{X}\,(t\in Q)$. Then $\ls\cdot\rs$ is a decomposable norm (\cite{Ku}, Lemma~2.3.2).

Let $(\Omega,\Sigma,\mu)$ be a $\sigma$-finite measure space,
let $E$ be an order-dense ideal in  $L_{0}(\Omega)$ and let $X$ be a Banach space.
By $L_{0}(\Omega,X)$ we denote the space of (equivalence classes of)  Bochner $\mu$-measurable vector functions
acting from $\Omega$ to $X$. As usual, vector-functions are equivalent if they have
equal values at almost all points of the set $\Omega$. If $\widetilde{f}$ is the coset of a measurable vector-function $f:\Omega\rightarrow X$ then $t\mapsto\|f(t)\|$,$(t\in\Omega)$ is a scalar measurable function whose coset is denoted by the symbol $\ls\widetilde{f}\rs\in L_{0}(\mu)$. Assign by definition
$$
E(X):=\{f\in L_{0}(\mu,X):\,\ls f\rs\in E\}.
$$
Then $(E(X),E)$ is a lattice-normed space with a decomposable norm (\cite{Ku}, Lemma~2.3.7.).
If $E$ is a Banach lattice then the lattice-normed space $E(X)$ is  a Banach space with respect to the norm $|\|f|\|:=\|\|f(\cdot)\|_{X}\|_{E}$.

Let $E$ be a Banach lattice and let $(V,E)$ be a lattice-normed space. By definition,
$\ls x\rs\in E_{+}$ for every $x\in V$, and we can introduce some \textit{mixed norm} in $V$ by the formula
$$
\||x|\|:=\|\ls x\rs\|\,\,\,(\forall\, x\in V).
$$
The normed space $(V,\||\cdot|\|)$ is called a \textit{space with a mixed norm}.
In view of the inequality $|\ls x\rs-\ls y\rs|\leq\ls x-y\rs$ and monotonicity of the norm in $E$, we have
$$
\|\ls x\rs-\ls y\rs\|\leq\||x-y|\|\,\,\,(\forall\, x,y\in V),
$$
so a vector norm is a norm continuous operator from $(V,\||\cdot|\|)$ to $E$. A lattice-normed space $(V,E)$ is called
a \textit{Banach space with a mixed norm} if the normed space $(V,\||\cdot|\|)$ is complete with respect to the norm convergence.

Consider lattice-normed spaces $(V,E)$ and $(W,F)$, a linear operator $T:V\rightarrow W$ and a positive operator
$S\in L_{+}(E,\,F)$. If the condition
$$
\ls Tv\rs\leq
S\ls v\rs;\,(\forall\, v\in V)
$$
is satisfied then we say that $S$ \textit{dominates} or
\textit{majorizes} $T$ or that $S$ is \textit{dominant}
or {majorant} for $T$.
In this case $T$ is called a \textit{dominated} or
\textit{majorizable} operator.  The set of all dominants of the operator $T$
is denoted by $\text{maj}(T)$. If there is the least element in
$\text{maj}(T)$ with respect to the order induced by $L_{+}(E,F)$ then
it is called the {\it least} or the {\it exact dominant} of $T$ and it is denoted by
$\ls T\rs$. The set of all dominated operators from $V$ to $W$ is denoted by $M(V,W)$.

Narrow operators in lattice-normed spaces were firstly introduced  in \cite{Pl-6}. Let us give some definitions.
\begin{definition} \label{def:nar1}
Let $(V,E)$ be a  lattice-normed space over an  atomless vector lattice and $X$ be a Banach space. A linear operator $T: V \to X$ is called:
\begin{itemize}
\item \emph{order-to-norm} continuous,   if $T$ sends $(bo)$-convergent nets in $V$ to  norm convergent nets in $X$;
\item \emph{narrow }  if for every $v\in V$ and $\varepsilon > 0$ there exist $MC$ fragments $v_1$, $v_2$ of $v$ such that $\|Tv_1-Tv_2\|<\varepsilon$;
\item \emph{strictly narrow } if for every $v\in V$ there exist $MC$ fragments $v_1$, $v_2$ of $v$ such that $Tv_1 = Tv_2  $.
\end{itemize}
\end{definition}

Next is the definition of an order narrow operator.

\begin{definition} \label{def:nar2}
Let $(V,E)$ and $(W,F)$ be  lattice-normed spaces  with $E$ atomless. A linear operator  $T:V\to W$ is called
\begin{itemize}
  \item \emph{order narrow }  if for every $v\in V$ there exists a net of decompositions $v = v_\alpha^{1} \sqcup v_\alpha^{2}$ such that $(Tv_{\alpha}^{1} - Tv_{\alpha}^{2} )\overset{(bo)}\longrightarrow 0$.
\end{itemize}
\end{definition}

\section{Dominated operators to sequences spaces }

In this section we investigate narrow operators from Banach-Kantorovich space to sequences Banach lattices.

Our first result here is the following theorem.

\begin{thm} \label{thm:newww}
Let $(V,E)$ be a Banach-Kantorovich space over an atomless Dedekind complete vector lattice $E$ and $\Gamma$ any set. Let $X = X(\Gamma)$ denote one of the Banach lattices $c_0(\Gamma)$ or $\ell_p(\Gamma)$ with $1 \leq p < \infty$. Then every order-to-norm continuous linear dominated  operator $T: V \to X$ is narrow.
\end{thm}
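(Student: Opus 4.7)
The plan is to exploit the dominant to reduce the problem to a finite-dimensional target, and then to a halving statement of Liapounoff type for the fragment lattice of a single element. Fix $v\in V$ and $\varepsilon>0$, choose a positive dominant $S\in\text{maj}(T)$, and denote by $e_\gamma^*$ the coordinate functionals on $X$. For every fragment $z\sqsubseteq v$ one has $\ls z\rs\le\ls v\rs$ in $E$, and since $|Tz|=\ls Tz\rs\le S\ls z\rs$ in the Banach lattice $X$, every $\gamma\in\Gamma$ satisfies
\[
|e_\gamma^*(Tz)|\le e_\gamma^*(|Tz|)\le e_\gamma^*\bigl(S\ls v\rs\bigr).
\]

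First I would discard the tail: because $S\ls v\rs\in X$ and $X$ is either $c_0(\Gamma)$ or $\ell_p(\Gamma)$, one can choose a finite subset $\Gamma_0\subset\Gamma$ with $\|P_{\Gamma\setminus\Gamma_0}S\ls v\rs\|_X<\varepsilon/4$, where $P_A$ denotes the canonical coordinate projection onto the indices in $A$. The pointwise bound above then forces
\[
\|P_{\Gamma\setminus\Gamma_0}(Tv_1-Tv_2)\|_X<\varepsilon/2
\]
for every pair of MC fragments $v_1,v_2$ of $v$, so it remains to find MC fragments $v_1,v_2$ of $v$ with $\|P_{\Gamma_0}T(v_1-v_2)\|_X<\varepsilon/2$. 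Let $\widetilde T:=P_{\Gamma_0}T\colon V\to\mathbb{R}^{|\Gamma_0|}$; it is linear, order-to-norm continuous, and dominated by $P_{\Gamma_0}S$.

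The heart of the proof will then be the following Liapounoff-type claim: for every $v\in V$ the set $K_v:=\{\widetilde T z:z\sqsubseteq v\}$ has convex closure in $\mathbb{R}^{|\Gamma_0|}$. Granted this, since both $0=\widetilde T(0)$ and $\widetilde T v$ lie in $K_v$, the midpoint $\frac{1}{2}\widetilde Tv$ belongs to $\overline{K_v}$, so one may select $v_2\sqsubseteq v$ with $\|\widetilde Tv_2-\frac{1}{2}\widetilde Tv\|<\varepsilon/4$. Setting $v_1:=v-v_2$ yields an MC decomposition $v=v_1\sqcup v_2$ and
\[
\|\widetilde T(v_1-v_2)\|=\|\widetilde T(v-2v_2)\|=2\bigl\|\widetilde Tv_2-\tfrac{1}{2}\widetilde Tv\bigr\|<\varepsilon/2,
\]
which, together with the tail estimate, gives $\|Tv_1-Tv_2\|_X<\varepsilon$ and proves narrowness.

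The main obstacle is precisely this convexity claim, which is where the atomlessness of $E$ must enter. My strategy is to lift the atomless order structure of $E$ to the fragment lattice $\mathcal{F}_v$ through the decomposability axiom~(4): every decomposition $\ls v\rs=e_1+e_2$ in $E_+$ lifts to an MC splitting $v=v_1+v_2$ with $\ls v_i\rs=e_i$, and atomlessness of $E$ provides arbitrarily fine such decompositions. Combining these liftings with the order-to-norm continuity of $\widetilde T$, one can then run a Liapounoff-style exhaustion/maximality argument on $\mathcal{F}_v$ --- or equivalently pass through the Maharam representation of the principal band generated by $\ls v\rs$ as an atomless $L_0(\mu)$ and invoke the classical Liapounoff theorem for $\mathbb{R}^n$-valued atomless vector measures --- to conclude that $\overline{K_v}$ is convex.
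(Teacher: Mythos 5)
Your reduction to a finite-dimensional target is exactly the paper's: bound $|Tz|$ for every fragment $z\sqsubseteq v$ by the fixed element $S\ls v\rs$ of $X$, truncate the tail outside a finite set $\Gamma_0$, and treat the head $P_{\Gamma_0}T$ separately; your tail estimate and the concluding triangle inequality are correct (the paper does the same with the exact dominant $\ls T\rs$ in place of an arbitrary dominant $S$, which changes nothing). The divergence is entirely in the finite-dimensional step. The paper does not prove that step: it quotes Lemma~4.11 of \cite{Pl-6}, which states precisely that every order-to-norm continuous linear operator from a Banach--Kantorovich space over an atomless Dedekind complete $E$ into a finite-dimensional Banach space is narrow. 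You instead propose to derive it from a Liapounoff-type claim that $\overline{\{\widetilde Tz:\,z\sqsubseteq v\}}$ is convex (note that you only need the single point $\tfrac12\widetilde Tv$ to lie in this closure, not full convexity), and this is where the proposal has a genuine gap: the claim is asserted but never proved, only two strategies for proving it are named.

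Of those two strategies, the second is unsound as stated: a general atomless Dedekind complete vector lattice --- even the principal band generated by $\ls v\rs$ --- need not be representable as $L_0(\mu)$ for a countably additive measure $\mu$, since by Maharam's theorem only complete Boolean algebras carrying a strictly positive countably additive measure arise in this way, and an arbitrary atomless complete Boolean algebra of components need not. Moreover the set function $z\mapsto\widetilde Tz$ on the fragments of $v$ is only finitely additive, and the classical Liapounoff theorem genuinely requires countable additivity; for merely finitely additive nonatomic measures the closure of the range can fail to be convex. The missing ingredient is exactly the point where order-to-norm continuity must substitute for $\sigma$-additivity, via an exhaustion or maximality argument on $\mathcal{F}_v$ built from decomposability and the atomlessness of $E$; you name this route but do not carry it out. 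Since this is, by your own description, the heart of the proof, the argument is incomplete unless you either execute that exhaustion argument in detail or simply invoke the finite-dimensional lemma from \cite{Pl-6}, as the paper does.
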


For the proof, we need the following lemma (\cite{Pl-6}, Lemma~4.11).

\begin{lemma} \label{le:w88495737}
Let $(V,E)$ be a Banach-Kantorovich space over an atomless Dedekind complete vector lattice $E$, $F$ a finite dimensional Banach space. Then  every order-to-norm continuous linear  operator $T: V \to F$ is narrow.
\end{lemma}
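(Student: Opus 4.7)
The plan is to prove the stronger property that $T$ is \emph{strictly narrow}: for each $v\in V$ I exhibit a fragment $z\sqsubseteq v$ with $Tz=\tfrac{1}{2}Tv$, whence $z$ and $v-z$ are $MC$ fragments of $v$ with $Tz-T(v-z)=0$, yielding narrowness a fortiori. The strategy encodes $T$ as a vector measure on the Boolean algebra of band projections associated with $\ls v\rs$ and invokes Lyapunov's convexity theorem in the finite-dimensional target $F$. Concretely, fix $v$ and let $\mathfrak{B}$ denote the Boolean algebra of band projections of the band in $E$ generated by $\ls v\rs$; Dedekind completeness of $E$ makes $\mathfrak{B}$ complete and atomlessness of $E$ makes it atomless. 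For each $\pi\in\mathfrak{B}$, the split $\ls v\rs=\pi\ls v\rs+(1-\pi)\ls v\rs$ together with decomposability produces a unique fragment $\hat\pi v\sqsubseteq v$ with $\ls\hat\pi v\rs=\pi\ls v\rs$, and $\pi\mapsto\hat\pi v$ is additive on disjoint projections. Choosing a basis of $F$, write $T=(T_1,\dots,T_n)$ with $n=\dim F$ and set
\[
 \mu:\mathfrak{B}\to F,\qquad \mu(\pi):=T(\hat\pi v).
\]
Then $\mu$ is finitely additive with $\mu(0)=0$ and $\mu(\mathbf{1})=Tv$; if $\pi_\alpha\downarrow 0$ in $\mathfrak{B}$ then $\ls\hat\pi_\alpha v\rs=\pi_\alpha\ls v\rs\downarrow 0$ in $E$, hence $\hat\pi_\alpha v\overset{(bo)}\longrightarrow 0$ in $V$, and order-to-norm continuity of $T$ gives $\mu(\pi_\alpha)\to 0$ in $F$, so $\mu$ is an order-continuous $F$-valued vector measure.

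Next, I show that each scalar coordinate $\mu_i$ is non-atomic. Suppose for contradiction that some $\pi_0\in\mathfrak{B}$ is an atom of $\mu_i$, and set $U=\{\pi\le\pi_0:\mu_i(\pi)=\mu_i(\pi_0)\}$. The atom dichotomy combined with the inclusion--exclusion identity $\mu_i(\pi'\vee\pi'')+\mu_i(\pi'\wedge\pi'')=\mu_i(\pi')+\mu_i(\pi'')$ forces $U$ to be closed under finite meets, while order continuity of $\mu_i$ makes $U$ closed under decreasing nets. Dedekind completeness of $E$ ensures that $\pi^\ast:=\inf U$ exists and lies in $U$. Atomlessness of $\mathfrak{B}$ now splits $\pi^\ast=\pi_1+\pi_2$ with $\pi_1,\pi_2>0$; minimality of $\pi^\ast$ in $U$ forces $\mu_i(\pi_1)=\mu_i(\pi_2)=0$, contradicting $\mu_i(\pi_1)+\mu_i(\pi_2)=\mu_i(\pi^\ast)=\mu_i(\pi_0)\neq 0$. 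Since any atom of the vector measure $\mu$ must be an atom of at least one coordinate $\mu_i$, $\mu$ itself is non-atomic.

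To finish, represent $\mathfrak{B}$ as a measure algebra via Loomis--Sikorski (or equivalently work in its Stone space), so that $\mu$ is transported to a classical countably additive non-atomic $F$-valued vector measure without changing its range. Lyapunov's convexity theorem then yields that $\mu(\mathfrak{B})\subseteq F$ is convex and compact; in particular the midpoint $\tfrac{1}{2}Tv=\tfrac{1}{2}(\mu(0)+\mu(\mathbf{1}))$ belongs to $\mu(\mathfrak{B})$, and any $\pi$ realizing it produces the desired fragment $z=\hat\pi v$. The main obstacle is the non-atomicity step: it is the one place where atomlessness of $\mathfrak{B}$, Dedekind completeness of $E$, and order continuity of $T$ must all be combined; once $\mu$ has been shown to be non-atomic, the finite-dimensionality of $F$ reduces the rest to a standard invocation of Lyapunov.
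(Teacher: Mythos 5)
Your proof cannot be compared against an internal argument, because the paper does not actually prove Lemma~\ref{le:w88495737}: it imports it verbatim from \cite{Pl-6} (Lemma~4.11). Taken on its own merits, your argument is correct and even yields the stronger conclusion that $T$ is strictly narrow. The main chain checks out: $\hat\pi v$ exists by decomposability and is \emph{unique} because $\pi\ls v\rs\perp(1-\pi)\ls v\rs$ (two decompositions differ by an element whose vector norm is dominated by both $2\pi\ls v\rs$ and $2(1-\pi)\ls v\rs$, hence is zero), which also gives the additivity of $\pi\mapsto\hat\pi v$; order continuity of $\mu$ follows from order-to-norm continuity of $T$ exactly as you say; atomlessness of $E$ does pass to the Boolean algebra $\mathfrak{B}$; and your meet-closure/decreasing-net argument correctly places $\pi^\ast=\inf U$ in $U$ (meet-closure makes $U$ downward directed, completeness of $\mathfrak{B}$ plus the infinite distributive law reduce $\pi_\gamma\downarrow\pi$ to $\pi_\gamma\wedge\pi^{c}\downarrow 0$, where order continuity applies).

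One point deserves a correction: the parenthetical ``or equivalently work in its Stone space'' is not an innocent equivalence. Extending the clopen-algebra measure to a regular Borel measure on the Stone space (which works, since on a compact zero-dimensional space countable additivity on clopens is vacuous) preserves the range only \emph{densely}: clopen sets approximate Borel sets in variation, so $\mu(\mathfrak{B})$ is dense in, but not a priori equal to, the compact convex Borel range, and exact attainment of the midpoint $\tfrac12 Tv$ does not follow by that route without a further exhaustion argument. The Loomis--Sikorski route you name first is the one that closes the proof exactly: with a surjective $\sigma$-homomorphism $h\colon\Sigma\to\mathfrak{B}$, the measure $\nu=\mu\circ h$ is countably additive precisely because of the order continuity you established; $\nu$ inherits non-atomicity (an atom $A$ of $\nu$ would make $h(A)$ an atom of $\mu$, since every $\sigma\le h(A)$ equals $h(B)$ for some $B\subseteq A$); and $\nu(\Sigma)=\mu(\mathfrak{B})$ by surjectivity, so Lyapunov applied to $\nu$ delivers the midpoint inside $\mu(\mathfrak{B})$ itself. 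So keep Loomis--Sikorski and drop ``equivalently''; with that repair the proof is complete.
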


\begin{proof}[Proof of Theorem~\ref{thm:newww}]
Let  $T: V \to X$ be a dominated operator, $v \in V$ and $\varepsilon > 0$. Take an arbitrary $u\in\mathcal{F}_v$. Since  $u\bot(v-u)$ we have the  estimation:
\begin{align*}
\ls Tu\rs\leq \ls Tu\rs+\ls T(v-u)\rs\leq\\
\leq\ls T\rs\ls u\rs+\ls T\rs\ls v-u\rs=\ls T\rs\ls v\rs=f\in F_{+}.
\end{align*}
Then we choose a finite subset  $\Gamma_0 \subset \Gamma$, such that
\begin{enumerate}
  \item $|f(\gamma)| \leq \varepsilon/4$ для любого  $\gamma \in \Gamma \setminus \Gamma_0$, if $X = c_0(\Gamma)$ и
  \item $\sum\limits_{\gamma \in \Gamma \setminus \Gamma_0} (f(\gamma) )^p \leq (\varepsilon/4)^p$ if $X = \ell_p(\Gamma)$.
\end{enumerate}

Let $P$ be the projection of $X$ onto $X(\Gamma_0)$ along $X(\Gamma \setminus \Gamma_0)$ and $Q = Id - P$ the orthogonal projection. Obviously, both $P$ and $Q$ are positive linear bounded operators. Since $S = P \circ T: V \to X(\Gamma_0)$ is a finite rank order-to-norm continuous dominated operator, by Lemma~\ref{le:w88495737}, $S$ is narrow, and hence, there are MC fragments $v_1$, $v_2$ of $v$ with $\|S(v_1) - S(v_2)\| < \varepsilon/2$.  Since  $|T(v_i)| \leq f$, by the positivity of   $Q$ we have that  $Q(Tv_i) \leq Qf$ and   $\|Q(T(v_i))\| \leq \|Q(f)\|$ for   $i = 1,2$. Moreover, by (1) and (2) $\|Q(f)\| \leq \varepsilon/4$. Then
\begin{align*}
\|T(v_1) - T(v_2)\| &= \|S(v_1) + Q(T(v_1)) - S(v_2) - Q(T(v_2))\| \\
&\leq \|S(v_1) - S(v_2)\| + \|Q(T(v_1))\| + \|Q(T(v_2))\| \\
&< \, \frac{\varepsilon}{2} \, + \|Q(f)\| + \|Q(f)\| < \varepsilon.
\end{align*}
\end{proof}

For a space with mixed norm   we obtain the following consequence of Theorem~\ref{thm:newww}.

\begin{cor} \label{cor:BL}
Let $(V,E)$ be  a Banach space with mixed norm over  an atomless order complete and order continuous Banach lattice $E$ and $\Gamma$ any set. Let $X = X(\Gamma)$ denote one of the Banach lattices $c_0(\Gamma)$ or $\ell_p(\Gamma)$ with $1 \leq p < \infty$. Then every continuous dominated linear  operator $T: V \to X$ is narrow.
\end{cor}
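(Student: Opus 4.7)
My plan is to deduce the corollary from Theorem~\ref{thm:newww} by showing, under the extra hypotheses that $E$ is order complete and the norm on $E$ is order continuous, two things: (i) $(V,E)$ is in fact a Banach-Kantorovich space, and (ii) every $\||\cdot|\|$-continuous linear operator $T\colon V \to X$ is order-to-norm continuous. Once (i) and (ii) are in place, Theorem~\ref{thm:newww} applies immediately and yields narrowness of $T$.

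To verify (i) I would take a $(bo)$-fundamental net $(v_\alpha)$ in $V$ controlled by some $e_\gamma \downarrow 0$ in $E_+$. Order continuity of the norm on $E$ forces $\|e_\gamma\| \to 0$, and the estimate $\||v_\alpha - v_\beta|\| = \|\ls v_\alpha - v_\beta\rs\| \leq \|e_\gamma\|$ shows that $(v_\alpha)$ is Cauchy in the mixed norm. Mixed-norm completeness supplies a limit $v \in V$; passing to the limit in $\beta$ in the inequality $\ls v_{\alpha(\gamma)} - v_\beta\rs \leq e_\gamma$ and using norm continuity of $\ls\cdot\rs$ recorded in the preliminaries yields $\ls v_{\alpha(\gamma)} - v\rs \leq e_\gamma$, i.e.\ $v_\alpha$ is $(bo)$-convergent to $v$. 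For (ii), if $v_\alpha \overset{(bo)}{\longrightarrow} v$ with controlling net $e_\gamma \downarrow 0$ and $\ls v - v_{\alpha(\gamma)}\rs \leq e_\gamma$, then $\||v - v_{\alpha(\gamma)}|\| \leq \|e_\gamma\| \to 0$ by order continuity of $E$, so mixed-norm continuity of $T$ forces $T v_{\alpha(\gamma)} \to T v$ in $X$.

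The only mildly delicate point is ensuring that the mixed-norm limit obtained in (i) really coincides with the $(bo)$-limit of the original net, which is exactly what the inequality $|\ls x\rs - \ls y\rs| \leq \ls x-y\rs$ from the preliminaries delivers; no ingredient beyond what is already in the paper is required. Decomposability of $\ls\cdot\rs$ is carried over from the mixed-norm setup, so $(V,E)$ qualifies as a Banach-Kantorovich space over the atomless Dedekind complete vector lattice $E$, and Theorem~\ref{thm:newww} then gives the narrowness of $T$.
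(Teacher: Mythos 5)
Your proposal is correct and follows essentially the same route as the paper: the paper's proof consists only of your step (ii), deducing order-to-norm continuity of $T$ from the order continuity of the norm on $E$ and then invoking Theorem~\ref{thm:newww}. Your step (i), verifying that the mixed-norm completeness together with order continuity upgrades $V$ to a Banach-Kantorovich space, is a point the paper silently takes for granted, so your write-up is if anything more careful than the original.
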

\begin{proof}
It is enough to prove that every continuous operator $T:V\to X$ is order-to-norm continuous. Take a net $(v_{\alpha})_{\alpha\in\Lambda}$ which $(bo)$-convergent   to zero. Since $E$ is order continuous Banach lattice, we have $\||v_{\alpha}|\|=\|\ls v_{\alpha}\rs\|\longrightarrow 0$.
\end{proof}
The idea used in the proof of Theorem~\ref{thm:newww} could be generalized as follows.

\begin{definition}
Let $E$, $F$ be ordered vector spaces. We say that a linear operator $T: E \to F$ is \textit{quasi-monotone with a constant} $M > 0$ if for each $x,y \in E^+$ the inequality $x \leq y$ implies $Tx \leq M Ty$. An operator $T: E \to F$ is said to be \textit{quasi-monotone} if it is quasi-monotone with some constant $M > 0$.
\end{definition}

If $T \neq 0$ in the above definition, we easily obtain $M \geq 1$. Observe also that the quasi-monotone operators with constant $M = 1$ exactly are the positive operators.

Recall that a sequence of elements $(e_n)_{n=1}^\infty$ (resp., of finite dimensional subspaces $(E_n)_{n=1}^\infty$) of a Banach space $E$ is called a \textit{basis} (resp., a \textit{finite dimensional decomposition}, or \textit{FDD}, in short) if for every $e \in E$ there exists a unique sequence of scalars $(a_n)_{n=1}^\infty$ (resp. sequence $(u_n)_{n=1}^\infty$ of elements $u_n \in E_n$) such that $e = \sum\limits_{n=1}^\infty a_n e_n$ (resp., $e = \sum\limits_{n=1}^\infty u_n$). Every basis $(e_n)$ generates the FDD $E_n = \{\lambda e_n: \, \lambda \in \mathbb R\}$. Any basis $(e_n)$ (resp., any FDD $(E_n)$) of a Banach space generates the corresponding \textit{basis projections} $(P_n)$ defined by
$$
P_n \Bigl( \sum\limits_{k=1}^\infty a_k e_k \Bigr) = \sum\limits_{k=1}^n a_k e_k \,\,\,\,\, \left( \mbox{resp.,} \,\, P_n \Bigl( \sum\limits_{k=1}^\infty u_k \Bigr) = \sum\limits_{k=1}^n u_k \right),
$$
which are uniformly bounded. For more details about these notions we refer the reader to \cite{LT-1}. The orthogonal projections to $P_n$'s defined by $Q_n = Id - P_n$, where $Id$ is the identity operator on $E$, we will call the \textit{residual projections} associated with the basis $(e_n)_{n=1}^\infty$ (resp., to the FDD $(E_n)_{n=1}^\infty$).

\begin{definition}
A basis $(e_n)$ (resp., an FDD $(E_n)$) of a Banach lattice $E$ is called \textit{residually quasi-monotone} if there is a constant $M > 0$ such that the corresponding residual projections are quasi-monotone with constant $M$.
\end{definition}

In other words, an FDD $(E_n)$ of $E$ is residually quasi-monotone if the corresponding approximation of smaller in modulus elements is better, up to some constant multiple: if $x,y \in E$ with $|x| \leq |y|$ then $\|x - P_nx\| \leq M \|y - P_ny\|$ for all $n$ (observe that $\|z - P_nz\| \to 0$ as $n \to \infty$ for all $z \in E$).

\begin{thm} \label{thm:newww2}
Let $(V,E)$ be a Banach-Kantorovich space over an atomless Dedekind complete vector lattice $E$ and $F$ be a Banach lattice with a residually quasi-monotone basis or, more general, a residually quasi-monotone FDD. Then every dominated linear  operator $T: V \to F$ is narrow.
\end{thm}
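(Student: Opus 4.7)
The plan is to follow the strategy of the proof of Theorem~\ref{thm:newww}, replacing the coordinate projection onto $X(\Gamma_0)$ by the basis (or FDD) projection $P_n$ onto $F_1 \oplus \cdots \oplus F_n$ for a sufficiently large $n$; the residual projection $Q_n = Id - P_n$ plays the role of the tail-killing projection, and the residually quasi-monotone hypothesis delivers the required control on the tail.

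Concretely, fix $v \in V$ and $\varepsilon > 0$, pick any dominant $S \in \mathrm{maj}(T)$, and set $f := S \ls v \rs \in F_+$. Exactly as in the proof of Theorem~\ref{thm:newww}, every fragment $u \in \mathcal{F}_v$ satisfies $|Tu| \leq S\ls u\rs \leq f$, so $f$ serves as a single positive majorant for the images of all fragments of $v$. Choose $n$ so large that $\|Q_n f\| < \varepsilon/(8M)$, which is possible because $\|Q_n z\| \to 0$ for every $z \in F$ and the FDD is residually quasi-monotone with some constant $M$.

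Next, apply Lemma~\ref{le:w88495737} to the composition $P_n \circ T \colon V \to P_n F$: its range is finite-dimensional, and (taking order-to-norm continuity of $T$ as the tacit running hypothesis, as in Theorem~\ref{thm:newww}) $P_n \circ T$ is order-to-norm continuous. The lemma then produces MC fragments $v_1, v_2$ of $v$ with $\|P_n T v_1 - P_n T v_2\| < \varepsilon/2$. For the tail, decompose $T v_i = (T v_i)^+ - (T v_i)^-$; since $(T v_i)^\pm \leq |T v_i| \leq f$ and $Q_n$ is quasi-monotone with constant $M$, one has $Q_n (T v_i)^\pm \leq M\,Q_n f$, hence $\|Q_n T v_i\| \leq 2 M \|Q_n f\| < \varepsilon/4$ for $i=1,2$. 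The triangle inequality then gives $\|T v_1 - T v_2\| \leq \|P_n T v_1 - P_n T v_2\| + \|Q_n T v_1\| + \|Q_n T v_2\| < \varepsilon$, which is precisely narrowness.

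The principal obstacle is the tacit need for order-to-norm continuity of $T$ (paralleling the explicit hypothesis in Theorem~\ref{thm:newww}), which is what permits the invocation of Lemma~\ref{le:w88495737} on the finite-rank piece; a secondary technical point is that the cutoff index $n$ must be chosen solely from $f$, so that the single bound $\|Q_n T u\| \leq 2 M \|Q_n f\|$ holds uniformly over all fragments of $v$ — this is exactly the role played by the uniform majorization $|Tu| \leq f$ secured at the outset.
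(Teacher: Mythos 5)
Your proposal is correct and follows essentially the same route as the paper: truncate with the FDD projection $P_n$, apply Lemma~\ref{le:w88495737} to the finite-rank piece $P_n\circ T$, and control the tail $Q_nTv_i$ via the uniform majorant $f$ and residual quasi-monotonicity. In fact you are slightly more careful than the paper on two points it glosses over: you pass through $(Tv_i)^{\pm}\leq f$ to get the (correct) bound $\|Q_nTv_i\|\leq 2M\|Q_nf\|$ rather than the paper's unjustified $M\|Q_nf\|$, and you explicitly flag that order-to-norm continuity of $T$ is needed to invoke Lemma~\ref{le:w88495737}, a hypothesis absent from the theorem's statement and silently assumed in the paper's own proof.
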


\begin{proof}
Let $(F_n)$ be an FDD of $F$ with the corresponding projections $(P_n)$, and let $M > 0$ be such that for every $n \in \mathbb N$ the operator $Q_n = Id - P_n$ is quasi-monotone with constant $M$. Let $T: V \to F$ be a dominated linear  operator, $v \in V$ and $\varepsilon > 0$. Choose $f \in F_+$ so that $|Tx| \leq f$ for all $x \sqsubseteq v$. Since $\lim\limits_{n \to \infty} P_n f = f$, we have that $\lim\limits_{n \to \infty} Q_n f = 0$. Choose $n$ so that
\begin{equation} \label{eq:djjf7}
\|Q_n f\| \leq \frac{\varepsilon}{4M} \, .
\end{equation}
Since $S = P_n \circ T: V \to E_n$ is a finite rank dominated linear operator, by Lemma~\ref{le:w88495737}, $S$ is narrow, and hence, there are MC fragments $v_1$, $v_2$ of $v$ such that $\|Sv_1 - Sv_2\| < \varepsilon/2$. Since $|Tv_i| \leq f$, by the quasi-monotonicity of $Q_n$ we have that $\|Q_n(Tv_i)\| \leq M \|Q_n f\|$ for $i = 1,2$. Then by \eqref{eq:djjf7},
\begin{align*}
\|Tv_1 - Tv_2\| &= \|Sv_1 + Q(Tv_1) - Sv_2 - Q(Tv_2)\| \\
&\leq \|Sv_1 - Sv_2\| + \|Q ( T v_1)\| + \|Q ( T v_2)\| \\
&< \, \frac{\varepsilon}{2} \, + M\|Qf\| + M\|Qf\| < \varepsilon.
\end{align*}
\end{proof}

\begin{cor}
An atomless order continuous Banach lattice $E$ cannot admit a residually quasi-monotone FDD.
\end{cor}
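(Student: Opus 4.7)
The plan is to argue by contradiction and apply Theorem~\ref{thm:newww2} to the identity operator on $E$. Suppose an atomless order continuous Banach lattice $E$ admits a residually quasi-monotone FDD. Since every order continuous Banach lattice is Dedekind complete, $E$ is an atomless Dedekind complete vector lattice.

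Next I would verify that $E$, regarded as a lattice-normed space over itself via $\ls v \rs := |v|$, is a Banach--Kantorovich space. Decomposability of $\ls \cdot \rs$ is the Riesz Decomposition Property, noted in the Preliminaries. For $(bo)$-completeness: a $(bo)$-fundamental net in $(E,E)$ is precisely an order-Cauchy net in $E$, and Dedekind completeness of $E$ guarantees an order limit. Hence $(E,E)$ is a BKS over an atomless Dedekind complete vector lattice, which is the setting of Theorem~\ref{thm:newww2}. Moreover, the identity operator $\mathrm{Id}\colon E \to E$ is dominated, since $\ls \mathrm{Id}\, v\rs = |v| = \mathrm{Id}\,\ls v \rs$ and $\mathrm{Id}$ is positive on $E$. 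By the assumed existence of a residually quasi-monotone FDD on $F := E$, Theorem~\ref{thm:newww2} applies and forces $\mathrm{Id}$ to be narrow.

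Now I would derive a contradiction directly from the definition of narrowness. Let $v \in E$ and let $v_1, v_2$ be any MC fragments of $v$. Then $v_1 + v_2 = v$ and $v_1 \perp v_2$ in the sense $|v_1| \wedge |v_2| = 0$, which gives the standard identity $|v_1 - v_2| = |v_1| + |v_2| = |v_1 + v_2| = |v|$. Passing to the Banach lattice norm, $\|v_1 - v_2\| = \|v\|$. If $v \neq 0$ and $\varepsilon < \|v\|$, then no choice of MC fragments can satisfy $\|\mathrm{Id}\, v_1 - \mathrm{Id}\, v_2\| < \varepsilon$, contradicting narrowness of $\mathrm{Id}$. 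Hence no such FDD exists.

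The argument is essentially a short application of the previous theorem, so there is no serious obstacle; the only point requiring a moment's care is the translation between fragments in the lattice-normed sense (as defined in the Preliminaries) and classical disjoint decompositions in the Banach lattice $E$, which is immediate once one unwinds both definitions with $\ls \cdot \rs = |\cdot|$.
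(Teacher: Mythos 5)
Your proof is correct and takes essentially the same route as the paper's: regard $E$ as the lattice-normed space $(E,E)$ with $\ls v\rs=|v|$, apply Theorem~\ref{thm:newww2} to the identity operator, and observe that the identity cannot be narrow because $\|v_1-v_2\|=\|v\|$ for any MC fragments $v_1,v_2$ of $v$. The paper states exactly these two observations, only more tersely, so your write-up just supplies the details it leaves implicit.
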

\begin{proof}
Banach lattice $E$ is a lattice-normed space $(E,E)$, where vector norm coincide with module.
Thus, it is enough to observe that the identity operator of such a Banach lattice is not narrow.
\end{proof}

\section{Domination problem for narrow operators}
\label{sec5}

In this section we consider a domination problem for the exact dominant  of dominated linear  operators. The domination problem can be stated as follows. Let $(V,E)$, $(W,F)$ be lattice-normed spaces, $T: V \to W$ be linear operator and $\ls T\rs:E\to F$ be its exact dominant. Let $\mathcal P$ be some property of linear operators, so that $\mathcal P(R)$ means that $R$ possesses $\mathcal P$. Does $\mathcal P(T)$ imply $\mathcal P(\ls T\rs)$ and vice versa? Some recent results in this direction the reader can find in \cite{Get,Pl-7}.

Recall that vector lattice $E$ is said to possesses the {\it strong Freudenthal property}, if for  $f,e\in E$ such that $|f|\leq \lambda|e|$, $\lambda\in \Bbb{R}_{+}$ $f$ can be $e$-uniformly approximated by linear
combinations $\sum\limits_{k=1}^{n}\lambda_{k}\pi_{k}e$, where $\pi_{1},\dots,\pi_{n}$ are order projections in $E$.

Denote by $E_{0+}$ the conic hull of the set $\ls V\rs=\{\ls v\rs:\,v\in V\}$, i.e., the set of elements of the form $\sum\limits_{k=1}^{n}\ls v_{k}\rs$, where $v_{1},\dots,v_{n}\in V$, $n\in\Bbb{N}$.

\begin{thm}[\cite{Ku}, Theorem~4.1.18.] \label{dom:01}
Let $(V,E),(W,F)$ be lattice-normed spaces. Suppose $E$  possesses the  strong Freudenthal property, $V$ is decomposable and $F$ is order complete. Then every
dominated operator has the exact dominant $\ls T\rs$.
The exact dominant of an arbitrary operator $T\in M(V,W)$ can be calculated by the following formulas:
\begin{gather}
\ls T\rs(e)=\sup\left\{\sum\limits_{i=1}^n\ls Tv_i\rs:
\sum\limits_{i=1}^n\ls v_i\rs= e, \ls v_{i}\rs\bot\ls v_{j}\rs;\,i\neq j;\,n\in\Bbb{N}; \,e\in E_{0+}\right\};\\
\ls T\rs(e)=\sup\{\ls T\rs(e_{0}):\,e_{0}\in E_{0+};\, e_{0}\leq e\}
(e\in E_{+});\\
\ls T\rs(e)=\ls T\rs(e_+)+\ls T\rs(e_-),\,
(e\in E).
\end{gather}
\end{thm}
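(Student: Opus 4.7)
The plan is to construct $\ls T\rs$ explicitly via formula (1) on the conic hull $E_{0+}$, then extend successively to $E_+$ by (2) and to all of $E$ by (3), and finally verify minimality against any competing dominant.

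First I would define $\rho(e)$ for $e\in E_{0+}$ as the right-hand side of (1). Existence of the supremum is immediate: any $S\in\text{maj}(T)$ provides the upper bound $\sum\ls Tv_i\rs\leq\sum S\ls v_i\rs = S(e)$ on every admissible disjoint decomposition $e=\sum\ls v_i\rs$, and order completeness of $F$ delivers the supremum in $F_+$. Positive homogeneity $\rho(\lambda e)=\lambda\rho(e)$ is transparent from rescaling decompositions. The central step is additivity $\rho(e_1+e_2)=\rho(e_1)+\rho(e_2)$ on $E_{0+}$. The inequality $\geq$ is obtained by concatenating disjoint decompositions of $e_1$ and of $e_2$. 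For the reverse inequality, given a disjoint decomposition $e_1+e_2=\sum\ls w_j\rs$, I would use decomposability of $V$ to split each $w_j=w_j^{(1)}+w_j^{(2)}$ so that $\ls w_j^{(k)}\rs$ equals the piece of $e_k$ cut out of $\ls w_j\rs$ by the band projection onto the band generated by $e_k$; aggregating these pieces yields matching disjoint decompositions of $e_1$ and $e_2$ and the desired inequality via the vector triangle property.

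Next I would extend $\rho$ to $E_+$ via (2) and to $E$ via (3), and check that the extension is a positive linear operator. The non-trivial point is additivity on $E_+$: to match $\rho(e_1+e_2)$ with $\rho(e_1)+\rho(e_2)$ one needs that each $e_0\in E_{0+}$ with $e_0\leq e_1+e_2$ can cofinally be written $e_0=e_0^{(1)}+e_0^{(2)}$ with $e_0^{(k)}\in E_{0+}$ and $e_0^{(k)}\leq e_k$. This is where the strong Freudenthal hypothesis on $E$ enters, since it lets us uniformly approximate each $\ls v\rs$ by linear combinations $\sum\lambda_k\pi_k\ls v\rs$ of its order projections, producing enough cofinal splittings inside $E_{0+}$ for the suprema in (2) to be additive. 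Formula (3) then defines $\ls T\rs$ unambiguously on all of $E$ by positive-negative decomposition, and the extension is automatically linear.

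Finally, that $\rho$ dominates $T$ is immediate from the one-term decomposition $\ls v\rs=\ls v\rs$, which gives $\ls Tv\rs\leq\rho(\ls v\rs)$. For minimality, the bound $\sum\ls Tv_i\rs\leq S(e)$ recorded above shows $\rho(e)\leq S(e)$ on $E_{0+}$ for every $S\in\text{maj}(T)$, and the monotone extensions by (2) and (3) inherit this, so $\rho=\ls T\rs$. I expect the main obstacle to be the additivity argument on $E_{0+}$, where coherent splitting of the lattice norm of each $w_j$ has to be lifted to a splitting of $w_j$ inside $V$ through decomposability, in a way compatible with the prescribed band structure on $E$; getting order projections on $E$ to interact correctly with decomposability on $V$ is what makes the proof delicate, and the same mechanism is what powers the later use of the strong Freudenthal property in passing from $E_{0+}$ to $E_+$.
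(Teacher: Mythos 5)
First, a point of reference: the paper itself gives no proof of this statement --- it is quoted from Kusraev's monograph (\cite{Ku}, Theorem~4.1.18) --- so there is no internal argument to measure your proposal against, and your attempt has to be judged on its own. Its global scheme is the right one: define the candidate dominant on the conic hull $E_{0+}$ by formula (1), prove additivity there, extend monotonically to $E_+$ by (2) and to $E$ by (3), and verify domination and minimality against an arbitrary $S\in\operatorname{maj}(T)$. The existence of the suprema (order completeness of $F$ plus the upper bound $S(e)$) and the minimality argument are fine as you state them.

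The gap is in the additivity step on $E_{0+}$, which you correctly identify as the crux but resolve with the wrong tool. Splitting each $w_j$ according to ``the band projection onto the band generated by $e_k$'' fails whenever $e_1$ and $e_2$ are not disjoint: the two band projections then neither sum to the identity on the relevant band nor cut $\ls w_j\rs$ into pieces that sum to $e_1$ and $e_2$ respectively. What is actually needed is the Riesz decomposition property of $E$: from $\sum_j\ls w_j\rs=e_1+e_2$ one writes $\ls w_j\rs=a_j+b_j$ with $\sum_j a_j=e_1$ and $\sum_j b_j=e_2$, and only then lifts this to $w_j=w_j^{(1)}+w_j^{(2)}$ via decomposability of $V$. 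Even after this repair a second issue remains: formula (1) as stated takes the supremum only over \emph{disjoint} decompositions ($\ls v_i\rs\bot\ls v_j\rs$ for $i\neq j$), and both your concatenation step and the splitting just described produce decompositions that need not be disjoint. Reconciling the supremum over disjoint decompositions with the supremum over all decompositions is precisely where the strong Freudenthal property enters (order projections allow one to refine an arbitrary finite decomposition of $e$ by approximately disjoint ones built from fragments); it is not needed, as you suggest, for the passage from $E_{0+}$ to $E_+$, which again requires only Riesz decomposition plus decomposability. As written, your argument would establish the theorem for the variant of (1) without the disjointness constraint, but not the statement as given.
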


\begin{thm}
Let $(V,E)$, $(W,F)$ be  Banach-Kantorovich spaces,  $E,F$ be Dedekind  complete, $E$ be an  atomless and with the  strong Freudenthal property and   $T$ be a dominated linear  operator from $V$ to $W$. If $T$ is   order narrow,  then the same  is its exact dominant $\ls T\rs:E\to F$.
\end{thm}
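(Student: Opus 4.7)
The plan is to reduce to the case $e\in E_+$ (and further to $e\in E_{0+}$) and then combine the approximating decompositions of $e$ supplied by formula~(1) of Theorem~\ref{dom:01} with order narrowness of $T$ applied to each summand.

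For the reduction, write $e=e_+-e_-$ with $e_+\perp e_-$. A pair of disjoint decompositions $e_\pm=f_\alpha^{\pm,1}\sqcup f_\alpha^{\pm,2}$ combines to the disjoint decomposition $e_\alpha^j:=f_\alpha^{+,j}-f_\alpha^{-,j}$ of $e$, the disjointness being inherited from the mutually disjoint bands generated by $e_\pm$. Linearity of $\ls T\rs\in L_+(E,F)$ then reduces narrowness at $e$ to narrowness at $e_+$ and $e_-$, so we may assume $e\in E_+$; approximation via formula~(2) of Theorem~\ref{dom:01} allows a further reduction to $e\in E_{0+}$, and we fix a representation $e=\sum_k\ls v_k\rs$ with the $\ls v_k\rs$ pairwise disjoint.

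Now formula~(1) presents $\ls T\rs e$ as the supremum, over the directed (by refinement) set $\Pi$ of finite disjoint decompositions $\pi:e=\sum_{i=1}^{n(\pi)}\ls v_i^\pi\rs$, of the increasing expression $\sum_i\ls Tv_i^\pi\rs$; Dedekind completeness of $F$ yields $h_\pi:=\ls T\rs e-\sum_i\ls Tv_i^\pi\rs\downarrow 0$. For each $\pi$ and each $i$, order narrowness of $T$ supplies a net of MC fragment pairs $(v_i^{\pi,1,\beta_i},v_i^{\pi,2,\beta_i})$ of $v_i^\pi$ with $Tv_i^{\pi,1,\beta_i}-Tv_i^{\pi,2,\beta_i}\overset{(bo)}{\longrightarrow}0$. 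Put $e_{\pi,\beta}^j:=\sum_{i=1}^{n(\pi)}\ls v_i^{\pi,j,\beta_i}\rs$; then $e_{\pi,\beta}^1+e_{\pi,\beta}^2=e$ and $e_{\pi,\beta}^1\perp e_{\pi,\beta}^2$, because the MC property handles within-$i$ disjointness while $\ls v_i^\pi\rs\perp\ls v_j^\pi\rs$ handles disjointness across $i\ne j$. Applying formula~(1) to each $e_{\pi,\beta}^j$, using $\ls T\rs e_{\pi,\beta}^1+\ls T\rs e_{\pi,\beta}^2=\ls T\rs e$ by linearity, and the triangle inequality $\ls Tv_i^{\pi,1,\beta_i}\rs+\ls Tv_i^{\pi,2,\beta_i}\rs\ge\ls Tv_i^\pi\rs$ summed over $i$, one obtains $0\le\ls T\rs e_{\pi,\beta}^j-\sum_i\ls Tv_i^{\pi,j,\beta_i}\rs\le h_\pi$ for $j=1,2$, and hence
$$
|\ls T\rs e_{\pi,\beta}^1-\ls T\rs e_{\pi,\beta}^2|\le 2h_\pi+\sum_{i=1}^{n(\pi)}\ls Tv_i^{\pi,1,\beta_i}-Tv_i^{\pi,2,\beta_i}\rs.
$$

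The main technical obstacle is to assemble $(\pi,\beta)$ into a single net witnessing $(bo)$-convergence to $0$ of the right-hand side in the sense of Definition~\ref{def:nar2}. The first summand is controlled by the decreasing net $(2h_\pi)$ over $\Pi$, while for fixed $\pi$ the finite sum in the second is $(bo)$-null as $\beta\in\prod_i\Delta_i^\pi$ varies. I would handle this by indexing decompositions by pairs $(\pi,\beta)$ with refinement of $\pi$ as the primary direction and assembling, from $(2h_\pi)$ together with the dominating decreasing nets coming from the order narrowness of $T$ on each $v_i^\pi$, a common decreasing net in $F_+$ with infimum $0$ that eventually dominates the right-hand side.
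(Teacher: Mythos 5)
Your proposal follows essentially the same route as the paper: formula (1) of Theorem~\ref{dom:01} gives the increasing net with defect $h_\pi\downarrow 0$, order narrowness of $T$ splits each $v_i^\pi$, the two-sided estimate $0\le\ls T\rs e^j_{\pi,\beta}-\sum_i\ls Tv_i^{\pi,j,\beta_i}\rs\le h_\pi$ yields exactly the paper's key inequality, and the passage $E_{0+}\to E_+\to E$ is the same (the paper merely runs the reductions in the opposite order, and handles the residual $e-e_\alpha$ for $e\in E_+$ explicitly by attaching it to one fragment of the decomposition, which is the one detail your ``reduction via formula (2)'' leaves implicit). The proposal is correct in substance and does not differ in method from the paper's argument.
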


\begin{proof}
At first fix any $e\in E_{0+}$ and $\varepsilon>0$.  Since
$$
\left\{\sum\limits_{i=1}^n\ls Tv_i\rs:
\sum\limits_{i=1}^n\ls v_i\rs= e;\,\ls v_{i}\rs\bot\ls v_{j}\rs;\,i\neq j;\,n\in\Bbb{N}\right\}
$$
is an increasing net,  there exits  a net of finite collections $\{v^{\alpha}_{1},\dots,v^{\alpha}_{n_{\alpha}}\}\subset V$, $\alpha\in\Lambda$ with
$$
e=\bigsqcup\limits_{i=1}^{n_{\alpha}}\ls v_{i}^{\alpha}\rs,\,\alpha\in\Lambda
$$
and
$$
\Big(\ls T\rs(e)-\sum\limits_{i=1}^{n_{\alpha}}\ls Tv_{i}^{\alpha}\rs\,\Big)\leq y_{\alpha}\overset{(o)}\longrightarrow 0,
$$
where $0\leq y_{\alpha}$, $\alpha\in\Lambda$ is an decreasing net and $\inf(y_{\alpha})_{\alpha\in\Lambda}=0$. Fix some $\alpha\in\Lambda$. Since $T$ is order narrow operator we may assume that there exist a finite set of a nets of a decompositions $v_{i}^{\alpha}=u_{i}^{\beta_{\alpha}}\sqcup w_{i}^{\beta_{\alpha}}$, $i\in\{1,\dots,n_{\alpha}\}$ which depends of $\alpha$, indexed by the same set $\Delta$, such that $\ls Tu_{i}^{\beta_{\alpha}}-Tw_{i}^{\beta_{\alpha}}\rs\overset{(bo)}\longrightarrow 0$, $i\in\{1,\dots, n_{\alpha}\}$. Let $f^{\beta_{\alpha}}=\coprod\limits_{i=1}^{n_{\alpha}}\ls u_{i}^{\beta_{\alpha}}\rs$ and $g^{\beta_{\alpha}}=\coprod\limits_{i=1}^{n_{\alpha}}\ls w_{i}^{\beta_{\alpha}}\rs$. Then we have
\begin{align*}
0\leq\ls T\rs(f)-\sum\limits_{i=1}^{n_{\alpha}}(\ls Tu_{i}^{\beta_{\alpha}}\rs)\leq\\
\ls T\rs(e)-\sum\limits_{i=1}^{n_{\alpha}}\ls Tv_{i}^{\alpha}\rs;\\
0\leq\ls T\rs(g)-\sum\limits_{i=1}^{n_{\alpha}}(\ls Tw_{i}^{\beta_{\alpha}}\rs)\leq\\
\ls T\rs(e)-\sum\limits_{i=1}^{n_{\alpha}}\ls Tv_{i}^{\alpha}\rs.
\end{align*}
Now we may write
\begin{align*}
\Big|\ls T\rs f^{\beta_{\alpha}}-\ls T\rs g^{\beta_{\alpha}}\Big|=\\
\Big|\ls T\rs f^{\beta_{\alpha}}-\sum\limits_{i=1}^{n_{\alpha}}\ls Tu_{i}^{\beta_{\alpha}}\rs+
\sum\limits_{i=1}^{n_{\alpha}}\ls Tu_{i}^{\beta_{\alpha}}\rs-
\sum\limits_{i=1}^{n_{\alpha}}\ls Tw_{i}^{\beta_{\alpha}}\rs+
\sum\limits_{i=1}^{n_{\alpha}}\ls Tw_{i}^{\beta_{\alpha}}\rs-\ls T\rs g^{\beta_{\alpha}}\Big|\leq\\
\Big|\ls T\rs f^{\beta_{\beta_{\alpha}}}-\sum\limits_{i=1}^{n_{\beta_{\alpha}}}\ls Tu_{i}^{\beta_{\alpha}}\rs\Big|+
\Big|\ls T\rs g^{\beta_{\alpha}}-\sum\limits_{i=1}^{n_{\alpha}}\ls Tw_{i}^{\alpha}\rs\Big|+
\Big|\sum\limits_{i=1}^{n_{\alpha}}\ls Tu_{i}^{\beta_{\alpha}}\rs-\sum\limits_{i=1}^{n_{\alpha}}\ls Tw_{i}^{\beta_{\alpha}}\rs\Big|\leq\\
2\Big(\ls T\rs(e)-\sum\limits_{i=1}^{n_{\alpha}}\ls Tv_{i}^{\alpha}\rs\Big)+
\sum\limits_{i=1}^{n_{\alpha}}\Big|\ls Tu_{i}^{\beta_{\alpha}}\rs-\ls Tw_{i}^{\beta_{\alpha}}\rs\Big|\leq\\
2\Big(\ls T\rs(e)-\sum\limits_{i=1}^{n_{\alpha}}\ls Tv_{i}^{\alpha}\rs\Big)+
\sum\limits_{i=1}^{n_{\alpha}}\ls Tu_{i}^{\beta_{\alpha}}- Tw_{i}^{\beta_{\alpha}}\rs\overset{(o)}\longrightarrow 0.
\end{align*}
Therefore $e=f^{\beta_{\alpha}}\sqcup g^{\beta_{\alpha}}$, $\alpha\in\Lambda$, $\beta_{\alpha}\in\Delta$ is a desirable net of decompositions. Now, let $e\in E_{+}$. Observe that
$D=\{f\leq e:\,f\in E_{0+}\}$
is a directed set.
Indeed, let $f_{1}=\coprod\limits_{i=1}^{k}\ls u_{i}\rs, f_{1}\leq e$,  $f_{2}=\coprod\limits_{j=1}^{n}\ls w_{j}\rs; f_{2}\leq e$, $u_{i}, w_{j}\in V$, $1\leq i\leq k$, $1\leq j\leq n$. Then by decomposability of the vector norm there exists a set  of mutually disjoint elements $(v_{ij})$, $1\leq i\leq k$, $1\leq j\leq n$, such that
$u_{i}=\coprod\limits_{j=1}^{n}v_{ij}$ for every $1\leq i\leq k$ and $w_{j}=\coprod\limits_{i=1}^{k}v_{ij}$ for every $1\leq j\leq n$. Let $f=\coprod\ls v_{ij}\rs$. It is clear that $\ls T\rs f_{i}\leq\ls T\rs f$, $i\in\{1,2\}$. Let $(e_{\alpha})_{\alpha\in\Lambda}, e_{\alpha}\in D$ be a net, where $\ls T\rs=\sup\limits_{\alpha}\ls T\rs e_{\alpha}$. Fix $\alpha\in\Lambda$, then for $e_{\alpha}\in D$ there exists a net of decompositions $e_{\alpha}=f_{\alpha}^{\beta}\sqcup g_{\alpha}^{\beta}$, $\beta\in\Delta$, such that $\Big|\ls T\rs f_{\alpha}^{\beta}-\ls T\rs g_{\alpha}^{\beta}\Big|\overset{(o)}\longrightarrow 0$. Thus we have
\begin{align*}
\Big| \ls T\rs(e-e_{\alpha}+f_{\alpha}^{\beta})-\ls T\rs g_{\alpha}^{\beta}\Big|=\\
\Big| \ls T\rs(e-e_{\alpha})+\ls T\rs f_{\alpha}^{\beta}-\ls T\rs g_{\alpha}^{\beta}\Big|\leq\\
 \Big(\ls T\rs e-\ls T\rs e_{\alpha}+
 \Big|\ls T\rs f_{\alpha}^{\beta}-\ls T\rs g_{\alpha}^{\beta}\Big|\Big)\overset{(o)}\longrightarrow 0.
\end{align*}
Hence $e=((e-e_{\alpha})\sqcup f_{\alpha}^{\beta}))\sqcup g_{\alpha}^{\beta}$ is a desirable net of decompositions.
Finally for arbitrary element $e\in E$ we have $e=e_{+}-e_{-}$ and by the Theorem~\ref{dom:01},(4) we have $\ls T\rs(e)=\ls T\rs(e_{+})-\ls T\rs(e_{-})$. Thus, if $e_{+}=f_{1}^{\alpha}\sqcup f_{2}^{\alpha}$ and $e_{-}=g_{1}^{\alpha}\sqcup g_{2}^{\alpha}$ are necessary nets of decompositions, then
\begin{align*}
\Big|\ls T\rs(f_{1}^{\alpha}+g_{1}^{\alpha})-\ls T\rs(f_{2}^{\alpha}+g_{2}^{\alpha})\Big|=\\
\Big|\ls T\rs f_{1}^{\alpha}-\ls T\rs f_{2}^{\alpha}+\ls T\rs g_{1}^{\alpha}-\ls T\rs g_{2}^{\alpha})\Big|\leq\\
\Big(\Big|\ls T\rs (f_{1}^{\alpha}- f_{2}^{\alpha})\Big|+\Big|\ls T\rs( g_{1}^{\alpha}-g_{2}^{\alpha})\Big|\Big)\overset{(o)}\longrightarrow 0
\end{align*}
and $e=(f_{1}^{\alpha}+g_{1}^{\alpha})\sqcup(f_{2}^{\alpha}+g_{2}^{\alpha})$ is a desirable net of decompositions.
\end{proof}
\begin{rem}
This  is an open question. Does the order narrowness of the operator $\ls T\rs$   implies the order narrowness of the  $T$?
A particular case was proved  in (\cite{Pl-6}, Theorem~5.1).
\end{rem}

\end{document}